\documentclass[reqno,11pt]{amsart}
\usepackage[margin=1in]{geometry}

\usepackage{amsthm, amsmath, amssymb, asymptote, bm}
\usepackage{tikz}
\usepackage{thmtools}
\usepackage{thm-restate}
\usepackage[utf8]{inputenc}
\usepackage{pgfplots}

\usepackage{microtype} 

\usepackage[utf8]{inputenc}
\usepackage[T1]{fontenc}

\usepackage[textsize=small,backgroundcolor=orange!20]{todonotes}

\usepackage[hidelinks]{hyperref}
\usepackage{url}

\usepackage{xcolor,graphics}

\newtheorem{theorem}{Theorem}[section]

\newtheorem{lemma}[theorem]{Lemma}

\newtheorem{conjecture}[theorem]{Conjecture}
\newtheorem*{question*}{Question}

\theoremstyle{definition}
\newtheorem{definition}[theorem]{Definition}

\newtheorem{case}{Case}
\newtheorem*{remark}{Remark}

\theoremstyle{remark}


\newcommand{\wt}{\widetilde}

\newcommand{\Beta}{\mathrm{B}}

\tikzstyle{P} = [draw, circle, black, fill, inner sep = 0pt, minimum width = 3pt]
\tikzstyle{every loop} = []

\title{Constraining Strong c-Wilf Equivalence Using Cluster Poset Asymptotics}

\author[Mitchell Lee]{Mitchell Lee}
\address{Harvard University, Cambridge, MA 02138, USA}
\email{mitchell@math.harvard.edu}
\author[Ashwin Sah]{Ashwin Sah}
\address{Massachusetts Institute of Technology, Cambridge, MA 02139, USA}
\email{asah@mit.edu}

\date{May 2018}

\begin{document}

\begin{abstract}
Let $\pi \in \mathfrak{S}_m$ and $\sigma \in \mathfrak{S}_n$ be permutations. An \emph{occurrence} of $\pi$ in $\sigma$ as a consecutive pattern is a subsequence $\sigma_i \sigma_{i+1} \cdots \sigma_{i+m-1}$ of $\sigma$ with the same order relations as $\pi$. We say that patterns $\pi, \tau \in \mathfrak{S}_m$ are \emph{strongly c-Wilf equivalent} if for all $n$ and $k$, the number of permutations in $\mathfrak{S}_n$ with exactly $k$ occurrences of $\pi$ as a consecutive pattern is the same as for $\tau$. In 2018, Dwyer and Elizalde \cite{dwyer2018wilf} conjectured (generalizing a conjecture of Elizalde \cite{elizalde2012most} from 2012) that if $\pi, \tau \in \mathfrak{S}_m$ are strongly c-Wilf equivalent, then $(\tau_1, \tau_m)$ is equal to one of $(\pi_1, \pi_m)$, $(\pi_m, \pi_1)$, $(m+1 - \pi_1, m+1-\pi_m)$, or $(m+1 - \pi_m, m+1 - \pi_1)$. We prove this conjecture using the cluster method introduced by Goulden and Jackson in 1979 \cite{goulden1979inversion}, which Dwyer and Elizalde previously applied \cite{dwyer2018wilf} to prove that $|\pi_1 - \pi_m| = |\tau_1 - \tau_m|$. A consequence of our result is the full classification of c-Wilf equivalence for a special class of permutations, the non-overlapping permutations. Our approach uses analytic methods to approximate the number of linear extensions of the ``cluster posets'' of Elizalde and Noy \cite{elizalde2012clusters}.
\end{abstract}

\maketitle

\section{Introduction}\label{sec:introduction}
Permutation patterns have held considerable interest ever since their introduction by Knuth in 1968 \cite{knuth1968art}. Of particular importance is the number of permutations in $\mathfrak{S}_n$ avoiding a fixed pattern $\pi$. Variations on this problem involve adding restrictions to the pattern: for instance, requiring that it appear in consecutive positions. This notion of \emph{consecutive pattern avoidance} was first systematically studied by Elizalde and Noy in 2003 \cite{elizalde2003consecutive} and has been considered from multiple viewpoints \cite{elizalde2006asymptotic, elizalde2012most, nakamura2480computational}. For more information on the consecutive pattern avoidance literature, we refer the reader to the 2016 survey of Elizalde \cite{elizalde2016survey}.

\par We now recall the definition of consecutive pattern avoidance. Given a sequence $\tau$ of $m$ distinct positive integers, define the \emph{standardization} of $\tau$ to be the sequence formed by replacing the $i$th smallest entry of $\tau$ by $i$ for $1 \leq i \leq m$. Given permutations $\pi \in \mathfrak{S}_m$ and $\sigma \in \mathfrak{S}_n$, an \emph{occurrence} of $\pi$ in $\sigma$ as a consecutive pattern is a subsequence $\sigma_i \sigma_{i+1} \cdots \sigma_{i+m-1}$ of $\sigma$ whose standardization is $\pi$. We say that $\sigma$ \emph{avoids} $\pi$ as a consecutive pattern if it has no occurrence of $\pi$ as a consecutive pattern.

\par We say that $\pi, \tau \in \mathfrak{S}_m$ are \emph{c-Wilf equivalent} and write $\pi \sim \tau$ if for all $n$, the number of permutations $\sigma \in \mathfrak{S}_n$ that avoid $\pi$ (as a consecutive pattern) is the same as the number that avoid $\tau$. We say that $\pi, \tau \in \mathfrak{S}_m$ are \emph{strongly c-Wilf equivalent} and write $\pi \overset{s}{\sim} \tau$ if for all $n$ and $k$, the number of permutations $\sigma \in \mathfrak{S}_n$ with $k$ occurrences of $\pi$ is the same as the number with $k$ occurrences of $\tau$. Clearly, if $\pi \overset{s}{\sim} \tau$ then $\pi \sim \tau$. Nakamura conjectured in 2011 that the reverse implication also holds \cite[Conjecture~6]{nakamura2480computational}.

\par A trivial example of strong c-Wilf equivalence is as follows. For any permutation $\pi = \pi_1\cdots\pi_m$, define the \emph{reverse} $\pi^R = \pi_m\cdots\pi_1$ and the \emph{complement} $\pi^C = (m+1-\pi_1)\cdots (m+1-\pi_m)$. Then $\pi\overset{s}\sim\pi^R\overset{s}\sim\pi^C\overset{s}\sim\pi^{RC}$. However, there are other strong c-Wilf equivalences; for example, $1342\overset{s}\sim 1432$. A comprehensive table of c-Wilf equivalence classes for permutations of length at most $5$ can be found in \cite[Table~1]{dwyer2018wilf}.

Following Dwyer and Elizalde \cite{dwyer2018wilf}, we say a permutation $\pi \in \mathfrak{S}_m$ is \emph{standard} if $\pi_1 < \pi_m$ and $\pi_1 + \pi_m\le m + 1$. At least one of the permutations $\pi$, $\pi^R$, $\pi^C$, and $\pi^{RC}$ is standard, so it suffices to study (strong) c-Wilf equivalence of standard permutations. Still, the problem of determining whether two permutations are (strongly) c-Wilf equivalent seems difficult in general.

We complete the solution to this problem for a certain restricted class of permutations. Following B\'ona \cite{bona2011non}, we say that $\pi \in \mathfrak{S}_m$ is \emph{non-overlapping} if $\pi_1 \cdots \pi_i$ and $\pi_{m-i + 1} \cdots \pi_m$ have different standardizations for $2 \leq i \leq m-1$. B{\'o}na showed that the fraction of non-overlapping permutations approaches $0.36409\ldots$. Furthermore, in 2011 Duane and Remmel \cite{duane2011minimal} showed that if $\pi, \tau \in \mathfrak{S}_m$ are non-overlapping and $(\pi_1, \pi_m) = (\tau_1, \tau_m)$, then $\pi\overset{s}\sim\tau$.

In 2012, Elizalde conjectured a converse to this result, which we prove in this paper.

\begin{restatable}[conjectured in {\cite[p.14]{elizalde2012most}}]{theorem}{thmnons} \label{thm:non-s}
Let $\pi, \tau \in \mathfrak{S}_m$ be non-overlapping, standard permutations. If $\pi\sim\tau$, then $(\pi_1, \pi_m) = (\tau_1, \tau_m)$.
\end{restatable}

This completes the classification of c-Wilf equivalence for non-overlapping standard permutations, and hence all non-overlapping permutations.

As a consequence of Theorem~\ref{thm:non-s} and \cite[Theorem~8]{dwyer2018wilf}, we have the following statement about strong c-Wilf equivalence of \emph{all} permutations.

\begin{restatable}[{\cite[Conjecture~7]{dwyer2018wilf}}]{corollary}{corstrong} \label{cor:strong}
Let $\pi, \tau \in \mathfrak{S}_m$ be standard permutations. If $\pi\overset{s}\sim\tau$, then $(\pi_1, \pi_m) = (\tau_1, \tau_m)$.
\end{restatable}

Our proof of Theorem~\ref{thm:non-s} uses the \emph{cluster method} introduced by Goulden and Jackson in 1979 \cite{goulden1979inversion}, which Dwyer and Elizalde previously applied to show that if $\pi, \tau \in \mathfrak{S}_m$ are standard and $\pi \overset{s}{\sim} \tau$, then $\pi_1 - \pi_m = \tau_1 - \tau_m$ \cite[Theorem~9]{dwyer2018wilf}.

The cluster method applies to Theorem~\ref{thm:non-s} as follows. In 2012, Elizalde and Noy \cite{elizalde2012clusters} defined the \emph{cluster posets} associated to a permutation $\pi \in \mathfrak{S}_m$. If $\pi$ is non-overlapping, then it has only one cluster poset $P_n^{\pi}$. This poset depends only on $m$, $n$, $a = \pi_1$, and $b = \pi_m$, so we will often write it as $P_n^{m, a, b}$. It can be explicitly described as follows.

\begin{definition}[cf. {\cite[Section~2.2]{dwyer2018wilf}}]
Let $a$, $b$, and $m$ be integers with $1 \leq a < b \leq m$. For any $n \geq 1$, define the cluster poset $P_n^{m,a,b}$ by gluing together $n$ chains of length $m$ as follows. The elements of $P_n^{m, a, b}$ are $A_{i, j}$ for $1 \leq i \leq n$ and $1 \leq j \leq m$, where $A_{i, b} = A_{i+1, a}$ for $1 \leq i \leq n-1$ and the $A_{i, j}$ are distinct otherwise (so $(m-1)n+1$ elements in total). The relations of $P_n^{m, a, b}$ are generated by the relations $A_{i, 1} \leq \cdots \leq A_{i, m}$ for $1 \leq i \leq n$.
\end{definition}
The bottommost chain corresponds to $A_{1, j}$. An example is shown for $m = 8, a = 3, b = 5$ in Figure~\ref{fig:cluster-poset}.

\begin{figure}
\includegraphics{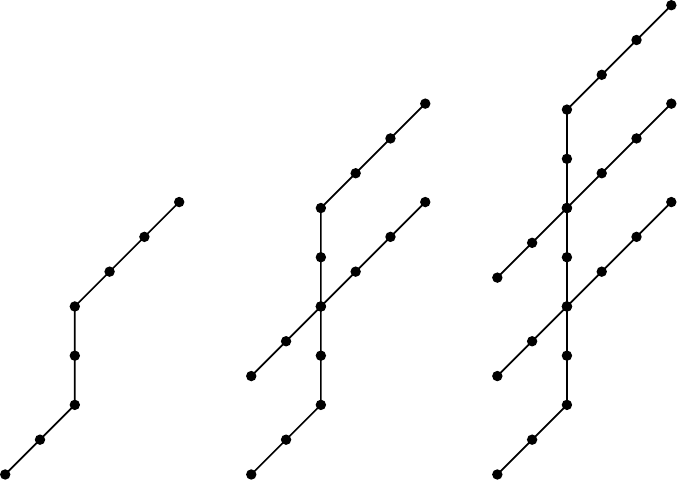}
\caption{The Hasse diagrams of the cluster posets $P_1^{8,3,5}, P_2^{8,3,5}, P_3^{8,3,5}$.}
\label{fig:cluster-poset}
\end{figure}

Dwyer and Elizalde \cite[Theorem~14]{dwyer2018wilf} showed that if $\pi, \tau \in \mathfrak{S}_m$ are non-overlapping and $\pi \sim \tau$, then the posets $P^\pi_n$ and $P^\tau_n$ have the same number of linear extensions for all $n$. Hence, to prove Theorem~\ref{thm:non-s}, it suffices to approximate the number of linear extensions of $P_n^{m, a, b}$ accurately enough to determine the pair $(a, b)$. Dwyer and Elizalde accomplished this for $b - a = 1$ \cite[Proposition~20]{dwyer2018wilf}, and the case $b - a > 1$ is completed by the following theorem and its corollary. We use the notation $e(P)$ for the number of linear extensions of a finite poset $P$.

\begin{theorem}\label{thm:main}
Fix integers $a$, $b$, and $m$ with $1 \leq a < b \leq m$. Then
\[\log e(P_n^{m,a,b}) = (m-b+a-1)n\log n + c(m,a,b)n + O_{m,a,b}(\log n),\]
where
\begin{align*}
c(m, a, b) =&~(b - a)\log \Beta \left(\frac{a - 1}{b - a}+1, \frac{m - b}{b - a}+1\right) - \log \Beta (a, m-b+1) \\
&- \log\Gamma(m - b + a + 1) + (m - 1)\log (m - 1) - (b - a)\log(b - a) - m + b - a + 1.
\end{align*}
Here $\Beta$ is the beta function, given by \[\Beta(\alpha, \beta) = \int_{0}^1 u^{\alpha - 1} (1 - u)^{\beta - 1} du = \frac{\Gamma(\alpha) \Gamma(\beta)}{\Gamma(\alpha + \beta)}.\]
\end{theorem}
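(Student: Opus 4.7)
The plan is to use the order polytope identity $e(P_n^{m,a,b}) = N!\cdot\mathrm{vol}(\mathcal{O}(P_n^{m,a,b}))$ with $N=(m-1)n+1$, reducing the problem to asymptotics of the polytope volume; Stirling immediately accounts for the $(m-b+a-1)n\log n$ coefficient and the $(m-1)\log(m-1) - m + 1$ part of $c(m,a,b)$. Parametrizing the polytope by the shared-element values $t_i := A_{i,b} = A_{i+1,a}$ for $i=1,\ldots,n-1$, the remaining coordinates decompose into independent simplices along each chain, yielding
\[
\mathrm{vol}(\mathcal{O}(P_n^{m,a,b})) = \int_{0\le t_1\le\cdots\le t_{n-1}\le 1} F(t)\,dt,
\]
where
\[
F(t) = \frac{t_1^{b-1}(1-t_1)^{m-b}}{(b-1)!(m-b)!} \cdot \prod_{i=2}^{n-1} \frac{t_{i-1}^{a-1}(t_i-t_{i-1})^{b-a-1}(1-t_i)^{m-b}}{(a-1)!(b-a-1)!(m-b)!} \cdot \frac{t_{n-1}^{a-1}(1-t_{n-1})^{m-a}}{(a-1)!(m-a)!}.
\]

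Next, I would apply a Laplace/saddle-point analysis. Setting $t_i \approx \phi(i/n)$ for a smooth profile $\phi\colon[0,1]\to[0,1]$ with $\phi(0)=0$ and $\phi(1)=1$, the continuum limit of $\log F$ reads $n\int_0^1 \mathcal{L}[\phi,\phi']\,dx - (b-a-1)n\log n + O(\log n)$ with Lagrangian $\mathcal{L}[\phi,\phi'] = (a-1)\log\phi + (b-a-1)\log\phi' + (m-b)\log(1-\phi)$. The Euler--Lagrange equation determines the optimal profile $\phi^*$ implicitly through an incomplete Beta integral, and the saddle value $\log F^*$ is a multiple of $\log\Beta(r+1,s+1)$ for certain exponents $r,s$. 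The Gaussian fluctuation correction $-\tfrac{1}{2}\log\det H$ is controlled using the tridiagonal Hessian of $-\log F$: its $n^2$-scale entries come from the coupling factor $(t_i-t_{i-1})^{b-a-1}$, and its almost-zero mode (corresponding to uniform shifts of the $t_i$'s) is stabilized by the $O(1)$ boundary contributions from chains $1$ and $n$; the matrix-tree formula for weighted path-graph Laplacians then yields an explicit determinant.

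The main obstacle is matching the resulting $n$-coefficient to the theorem's specific form $\Beta\bigl(\tfrac{a-1}{b-a}+1, \tfrac{m-b}{b-a}+1\bigr)$ with exponent $b-a$: the direct Euler--Lagrange yields Beta factors with denominator $b-a-1$ rather than $b-a$, so the claimed form must emerge from combining the saddle value, Gaussian correction, and various $\log\Gamma$ identities. A cleaner alternative may be to expand the $t_i^{a-1}$ and $(1-t_i)^{m-b}$ factors via the multinomial theorem, convert each resulting integral to a Dirichlet integral (evaluable exactly as a ratio of Gamma functions), and analyze the resulting combinatorial sum by a discrete saddle-point whose value naturally produces the desired Beta-function expression. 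In either approach, careful bookkeeping of the boundary behavior of $\phi^*$ and of the $O(\log n)$ subleading terms will be the most delicate step.
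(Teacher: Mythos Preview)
Your order-polytope setup and the integral over the shared-element values are correct and close to the paper's starting point, but the paper avoids Laplace entirely and thereby sidesteps the obstacle you identify. After a harmless modification $P_n\to Q_n$ (adjoining $m-b+a-1$ boundary elements so that every pivot carries the same single-site weight $x_i^{a-1}(1-x_i)^{m-b}$, at cost $O(\log n)$ in $\log e$), the paper performs the \emph{exact substitution} $x_i=f(y_i)$, where $f$ is the inverse regularized incomplete Beta function with parameters $\tfrac{a-1}{b-a}+1,\ \tfrac{m-b}{b-a}+1$, chosen so that $f'(t)^{\,b-a}\,f(t)^{a-1}(1-f(t))^{m-b}$ is constant. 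The exponent on $f'$ is $b-a$, not $b-a-1$, because the Jacobian $dx_i=f'(y_i)\,dy_i$ supplies one extra factor of $f'$ per variable; this is exactly the ``missing $+1$'' you could not locate. After substitution the integrand becomes a constant to the power $n+1$ times $\bigl(\prod_i(f(y_{i+1})-f(y_i))\big/\prod_i f'(y_i)\bigr)^{b-a-1}$, and a short elementary lemma (mean value theorem plus unimodality of $f'$) traps this ratio between constant multiples of $\prod_i(y_{i+1}-y_i)$ up to factors polynomial in $n$. The remaining integrals are Dirichlet integrals, evaluable in closed form, and Stirling then yields $c(m,a,b)$ with $O(\log n)$ error and no Hessian computation at all.

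Your Laplace plan has a genuine gap at precisely this point. The saddle value of your Lagrangian produces $(b-a-1)\log\Beta\bigl(\tfrac{a-1}{b-a-1}+1,\tfrac{m-b}{b-a-1}+1\bigr)$, so the entire discrepancy with the stated constant would have to emerge from the order-$n$ part of $-\tfrac12\log\det H$; you have not computed this, and your matrix-tree sketch does not show why the combination should collapse to a Beta with parameters shifted from denominator $b-a-1$ to $b-a$. The paper itself remarks (in its concluding section) that from the variational viewpoint the appearance of the extra weight on $\log f'$ is unexplained, so you should not expect this Hessian bookkeeping to be routine. Your multinomial-expansion alternative is also not what the paper does and would produce a signed sum whose dominant behaviour is not obviously easier to extract.
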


\begin{remark}
The weaker statement that $\log e(P_n^{m, a, b}) = (m-b+a-1)n \log n + O_{m, a, b}(n)$ is \cite[Lemma~18]{dwyer2018wilf}.
\end{remark}

\begin{restatable}[{\cite[Conjecture~21]{dwyer2018wilf}}]{corollary}{cormain} \label{cor:main}
Let $a$, $b$, $a'$, $b'$, and $m$ be integers with $1 \leq a < b \leq m$ and $1 \leq a' < b' \leq m$. Suppose that $b - a = b' - a' > 1$ and that $a + b < a' + b' \leq m+1$. Then for sufficiently large $n$ we have \[e(P_n^{m, a, b}) < e(P_n^{m, a', b'}).\]
\end{restatable}

In Section~\ref{sec:modify}, we will begin the proof of Theorem~\ref{thm:main} by defining a poset $Q_n^{m, a, b}$ such that $e(P_n^{m, a, b})$ and $e(Q_n^{m, a, b})$ are within a polynomial factor in $n$. In Section~\ref{sec:prob}, we will write $e(Q_n^{m, a, b})$ as a $(n+1)$-dimensional integral using a probabilistic interpretation of linear extensions. In Section~\ref{sec:bound}, we will compute sharp asymptotics for this integral, proving Theorem~\ref{thm:main}. In Section~\ref{sec:proof}, we will use Theorem~\ref{thm:main} to prove Corollary~\ref{cor:main} and conclude Theorem~\ref{thm:non-s}. In Section~\ref{sec:concluding}, we will discuss possible extensions of this technique and directions for further research.

\section{Modifying the Cluster Poset} \label{sec:modify}
In this section, we will define a poset $Q_n^{m, a, b}$ with the property that
\begin{equation} \label{eqn:modify} \log e(P_n^{m, a, b}) = \log e(Q_n^{m, a, b}) + O_{m, a, b}(\log n). \end{equation}
Hence, to prove Theorem~\ref{thm:main}, it will suffice to show that
\begin{equation} \label{eqn:suffices} \log e(Q_n^{m,a,b}) = (m-b+a-1)n\log n + c(m,a,b)n + O_{m,a,b}(\log n).\end{equation}

\begin{definition}
Let $a$, $b$, and $m$ be integers with $1 \leq a < b \leq m$. For any $n \geq 1$, define $Q_n^{m, a, b}$ to be $P_n^{m, a, b} \cup \{A_{0, b+1}, A_{0, b+2}, \ldots, A_{0, m}, A_{n+1, 1}, A_{n+1, 2}, \ldots, A_{n+1, a-1}\}$ with the added relations $A_{1, a} \leq A_{0, b+1} \leq A_{0, b+2} \leq \cdots \leq A_{0, m} $ and $A_{n+1, 1} \leq A_{n+1, 2} \leq \cdots \leq A_{n+1, a-1} \leq A_{n, b}$.
\end{definition}
This is a poset with cardinality $(m-1)n+m-b+a$. An example is shown for $m = 8, a = 3, b = 5$ in Figure~\ref{fig:modify-poset}. Since $P_n^{m,a,b}$ is an induced sub-poset of $Q_n^{m, a, b}$ and $|Q_n^{m,a,b}|-|P_n^{m,a,b}| = m-b+a-1$, we have
\[e(P_n^{m,a,b})\le e(Q_n^{m,a,b})\le |Q_n^{m, a, b}|^{m-b+a-1} e(P_n^{m,a,b}),\]
which implies \eqref{eqn:modify}.

\begin{figure}
\includegraphics{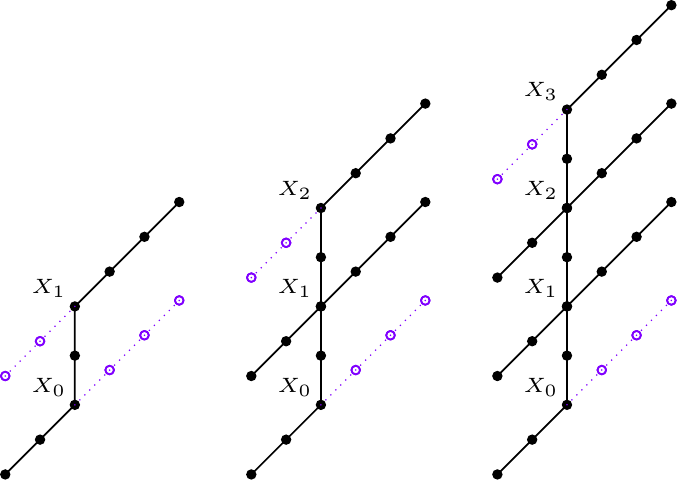}
\caption{The Hasse diagrams of the modified cluster posets $Q_1^{8,3,5}, Q_2^{8,3,5}, Q_3^{8,3,5}$. The added elements and relations are colored purple and dashed.}
\label{fig:modify-poset}
\end{figure}

\section{Probabilistic Interpretation of Linear Extensions} \label{sec:prob}
In this section, we will outline the first steps toward counting the linear extensions of $Q_n^{m, a, b}$. The key ingredient is the following probabilistic interpretation of the number of linear extensions of a poset.

Let $(P, \leq)$ be a finite poset and let $\varphi : P \to (0, 1)$ be a uniformly random function. Assuming that $\varphi$ is injective, which occurs with probability $1$, it induces a uniformly random linear order $\prec$ on $P$ given by $x \prec y$ if and only if $\varphi(x) < \varphi(y)$. The linear order $\prec$ is a linear extension of $P$ if and only if $\varphi$ is (strictly) \emph{order-preserving}; that is, if $\varphi(x) < \varphi(y)$ whenever $x < y$. Hence, we may interpret $e(P)$ probabilistically, viz. \[\frac{e(P)}{|P|!} = \Pr(\text{$\varphi$ is order-preserving}).\]

Applying this to the case $P = Q_n^{m, a, b}$, we obtain
\[\frac{e(Q_n^{m, a, b})}{((m-1)n + m-b+a)!} = \Pr(\text{$\varphi$ is order-preserving}).\]
To approximate this probability, we will first write it as a $(n+1)$-dimensional integral. Consider the elements $X_0, \ldots, X_n \in Q_n^{m, a, b}$ defined by $X_i = A_{i+1, a}$ for $0 \leq i \leq n-1$ and $X_i = A_{i, b}$ for $1 \leq i \leq n$. (These definitions coincide when $1 \leq i \leq n-1$.) Let $x_0, \ldots, x_n \in (0, 1)$. Conditioned on $\varphi(X_i) = x_i$ for $0 \leq i \leq n$, the probability that $\varphi$ is order-preserving can be computed as follows. If it is not the case that $x_0 < \cdots < x_n$, then $\varphi$ is not order-preserving. Otherwise, it is order-preserving with probability \[\prod_{i = 0}^{n} \frac{x_i^{a-1}}{(a-1)!}\frac{(1-x_{i})^{m-b}}{(m-b)!} \prod_{i=0}^{n-1}\frac{(x_{i+1} - x_i)^{b-a-1}}{(b-a-1)!}.\] Each factor $\frac{x_i^{a-1}}{(a-1)!}$ in this product is the probability that $\varphi(A_{i+1, 1}) < \cdots < \varphi(A_{i+1, a-1}) < x_i $, each factor $\frac{(1 - x_i)^{m-b}}{(m-b)!}$ is the probability that $x_i < \varphi(A_{i, b+1}) < \cdots < \varphi(A_{i, m})$, and each factor $\frac{(x_{i+1}-x_i)^{b-a-1}}{(b-a-1)!}$ is the probability that $x_i < \varphi(A_{i+1,a+1}) < \cdots < \varphi(A_{i+1,b-1}) < x_{i+1}$.

Hence
\begin{equation}\label{eqn:linext}
\frac{e(Q_n^{m, a, b})}{((m-1)n + m-b+a)!} = \int\displaylimits_{0< x_0<\cdots< x_n< 1} \prod_{i = 0}^{n} \frac{x_i^{a-1}}{(a-1)!}\frac{(1-x_{i})^{m-b}}{(m-b)!} \prod_{i=0}^{n-1}\frac{(x_{i+1} - x_i)^{b-a-1}}{(b-a-1)!}d\mathbf{x}.
\end{equation}

\section{Bounding $e(Q_n^{m, a, b})$} \label{sec:bound}
Let $I_n$ be the integral on the right-hand side of \eqref{eqn:linext}. In this section we prove Theorem~\ref{thm:main} by applying a change of variables to the integral $I_n$.

Define the function $g: [0, 1]\to [0, 1]$ by
\[g(t) = \frac{1}{\Beta\left(\frac{a - 1}{b - a}+1, \frac{m - b}{b - a}+1\right)}\int_0^t u^{\frac{a - 1}{b - a}}(1 - u)^{\frac{m - b}{b - a}}du.\]
This is a strictly increasing function, independent of $n$, with $g(0) = 0$ and $g(1) = 1$. Hence we may define its inverse $f = g^{-1} : [0, 1]\to [0, 1]$, which is also strictly increasing and independent of $n$. An example is shown for $m = 8, a = 3, b = 5$ in Figure~\ref{fig:plot}. Observe that
\[g'(t) = \frac{1}{\Beta\left(\frac{a - 1}{b - a}+1, \frac{m - b}{b - a}+1\right)}t^{\frac{a-1}{b-a}}(1-t)^{\frac{m-b}{b-a}}.\]
Using the formula $f'(t) = \frac{1}{g'(f(t))}$ yields
\begin{equation}\label{eqn:f-diffeq}f'(t)^{b-a}f(t)^{a-1}(1-f(t))^{m-b} = \Beta\left(\frac{a - 1}{b - a}+1, \frac{m - b}{b - a}+1\right)^{b-a}.\end{equation}
It follows that $f'$ is unimodal: there exists $\lambda \in [0, 1]$ such that $f'(t)$ is decreasing for $t \in [0,\lambda]$ and increasing for $t \in [\lambda, 1]$. (The constant $\lambda$ is given explicitly by $\lambda = g\left(\frac{a-1}{m-b+a-1}\right)$.) It also follows that there exists an integer $N = N(m, a, b) > 0$ such that for all $t \in (0, 1)$, we have
\begin{equation} \label{eqn:polybound}
    \frac{1}{(f'(t))^{b - a - 1}} \geq t^{N} (1-t)^N.
\end{equation} 

\begin{figure}
    \centering
    \input{plot.tex}
    \caption{A plot of $f$ for $m=8$, $a=3$, $b=5$. The marked points are \[\left(\frac{i+1}{27}, \wt{\operatorname{ht}}_\prec(X_i)\right)\] for $i = 0, \ldots, 25$, where $\prec$ is a random linear extension of $P_{25}^{8,3,5}$ generated by the Markov chain Monte Carlo algorithm of \cite{bubley1999faster}. Here $\wt{\operatorname{ht}}_\prec(X_i)$ denotes the fraction of elements of $P_{25}^{8,3,5} \setminus \{X_i\}$ that precede $X_i$ in the order $\prec$. Since the marked points are close to the plot of $f$, this figure agrees with the prediction of Conjecture~\ref{conj:concentration}.}
    \label{fig:plot}
\end{figure}

Now perform the substitution $y_i = g(x_i)$, so that $x_i = f(y_i)$. This yields
\begin{align}
I_n &= \int\displaylimits_{0< x_0<\cdots< x_n < 1} \prod_{i=0}^n \frac{x_i^{a-1}}{(a-1)!}\frac{(1-x_i)^{m-b}}{(m-b)!}\prod_{i=0}^{n-1}\frac{(x_{i+1} - x_i)^{b-a-1}}{(b-a-1)!}d\mathbf{x} \nonumber \\
&= \int\displaylimits_{0 < y_0 < \cdots < y_n < 1} \prod_{i = 0}^n \frac{f(y_i)^{a-1}}{(a-1)!}\frac{(1-f(y_i))^{m-b}}{(m-b)!}\prod_{i=0}^{n-1}\frac{(f(y_{i+1})-f(y_i))^{b-a-1}}{(b-a-1)!}\prod_{i=0}^n f'(y_i)d\mathbf{y}, \nonumber \\
&=(b-a-1)!\left(\frac{\Beta\left(\frac{a - 1}{b - a}+1, \frac{m - b}{b - a}+1\right)}{(a-1)!(m-b)!(b-a-1)!}\right)^{n + 1} \int\displaylimits_{0 < y_0 < \cdots < y_n< 1} \left(\frac{\prod_{i=0}^{n-1}(f(y_{i+1})-f(y_i))}{\prod_{i=0}^{n}f'(y_i)}\right)^{b-a-1}d\mathbf{y}, \label{eqn:lastintegral}
\end{align}
using \eqref{eqn:f-diffeq} to pass from the second to third line.

We will now bound the integrand of this last integral.
\begin{lemma}\label{lem:integrand}
Let $0 < y_0 < \cdots < y_n < 1$. Then, we have
\[\frac{f'(\lambda)}{f'(y_0)f'(y_n)}\prod_{i=1}^{n-1} (y_{i+1} - y_i)\le\frac{\prod_{i=0}^{n-1}(f(y_{i+1})-f(y_i))}{\prod_{i=0}^{n}f'(y_i)}\le\frac{1}{f'(\lambda)}\prod_{i=1}^{n-1} (y_{i+1} - y_i).\]
\end{lemma}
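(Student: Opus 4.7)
My plan is to apply the Mean Value Theorem on each subinterval $[y_i, y_{i+1}]$, then exploit the established unimodality of $f'$ to bound each intermediate value. Writing $f(y_{i+1}) - f(y_i) = f'(\xi_i)(y_{i+1}-y_i)$ for some $\xi_i \in (y_i, y_{i+1})$, the quantity of interest factors as
\[
\frac{\prod_{i=0}^{n-1}(f(y_{i+1})-f(y_i))}{\prod_{i=0}^{n}f'(y_i)} = \frac{\prod_{i=0}^{n-1}f'(\xi_i)}{\prod_{i=0}^{n}f'(y_i)}\prod_{i=0}^{n-1}(y_{i+1}-y_i),
\]
so the problem reduces to producing matching two-sided estimates for the ratio of $f'$-products.

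To estimate that ratio, I would let $k$ be the unique index (with the natural conventions if $\lambda$ lies outside $[y_0, y_n]$) such that $y_k \le \lambda \le y_{k+1}$, and split into three cases per subinterval. For $i < k$, the subinterval $[y_i, y_{i+1}]$ sits in the decreasing region of $f'$, so $f'(y_{i+1}) \le f'(\xi_i) \le f'(y_i)$; for $i > k$, the subinterval sits in the increasing region and the roles of the endpoints swap; for the unique straddling index $i = k$, unimodality gives $f'(\lambda) \le f'(\xi_i) \le \max(f'(y_k), f'(y_{k+1}))$. Multiplying these pointwise bounds and telescoping, the upper estimate on $\prod f'(\xi_i)$ collapses to $\prod_{i=0}^{n} f'(y_i)/\min(f'(y_k), f'(y_{k+1}))$, which is at most $\prod_{i=0}^n f'(y_i)/f'(\lambda)$ since $\lambda$ is the global minimizer of $f'$; the lower estimate on $\prod f'(\xi_i)$ collapses symmetrically to $f'(\lambda)\prod_{i=1}^{n-1}f'(y_i)$. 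Dividing by $\prod_{i=0}^n f'(y_i)$ and reinstating the spacing product, with the elementary bound $y_1 - y_0 \le 1$ used to pass between $\prod_{i=0}^{n-1}$ and $\prod_{i=1}^{n-1}$ where needed, yields both inequalities.

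The main point requiring care is the straddling interval $[y_k, y_{k+1}]$, where $f'$ fails to be monotone and only the weaker pointwise bound $f'(\xi_k) \ge f'(\lambda)$ is available; this is exactly what forces the global minimum $f'(\lambda)$ to appear in the final estimates. Edge cases in which $\lambda \notin [y_0, y_n]$ (so no subinterval straddles $\lambda$) are strictly easier, since every subinterval is then monotone and the same telescoping argument applies with $f'(\lambda)$ absorbed as a bound on the appropriate endpoint value $f'(y_0)$ or $f'(y_n)$.
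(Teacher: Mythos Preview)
Your approach is essentially the paper's: apply the Mean Value Theorem on each subinterval, then use the unimodality of $f'$ and a case split on the position of $\lambda$ to telescope the ratio of $f'$-products into the two-sided bound
\[
\frac{f'(\lambda)}{f'(y_0)f'(y_n)} \;\le\; \frac{\prod_{i=0}^{n-1}f'(\xi_i)}{\prod_{i=0}^{n}f'(y_i)} \;\le\; \frac{1}{f'(\lambda)}.
\]
The paper splits into four cases (distinguishing $y_\ell \le \lambda \le u_\ell$ from $u_\ell \le \lambda \le y_{\ell+1}$) while you collapse the middle two into a single ``straddling'' case; the content is identical.

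One point to flag: your final move, using $y_1 - y_0 \le 1$ to pass from $\prod_{i=0}^{n-1}(y_{i+1}-y_i)$ to $\prod_{i=1}^{n-1}(y_{i+1}-y_i)$, is valid for the upper bound but goes the wrong direction for the lower bound (dropping a factor at most $1$ makes the lower bound \emph{larger}, not smaller). Indeed, the lower inequality in the lemma as literally printed with $\prod_{i=1}^{n-1}$ is false: take $n=1$ and $y_0,y_1$ close, so that $f(y_1)-f(y_0)\to 0$ while the claimed bound is the fixed positive number $f'(\lambda)$. The paper's own ``it remains to show'' reduction has the same glitch. What both arguments correctly establish is the lower bound with the full spacing product $\prod_{i=0}^{n-1}(y_{i+1}-y_i)$; that is the intended statement and is all that is needed for the application in \eqref{eqn:boundintegrals}.
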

\begin{proof}
By the mean value theorem, for each $i$ with $0 \leq i \leq n-1$, there exists a real number $u_i$ with $y_i < u_i < y_{i+1}$ such that $f'(u_i) = \frac{f(y_{i+1}) - f(y_i)}{y_{i+1} - y_i}$. It remains to show that
\begin{equation}
\frac{f'(\lambda)}{f'(y_0) f'(y_n)} \leq \frac{\prod_{i=0}^{n-1} f'(u_i)}{\prod_{i=0}^{n} f'(y_i)} \leq \frac{1}{f'(\lambda)}. \label{eqn:altprod}
\end{equation}

Recall that $f'(t)$ is unimodal for $t \in (0, 1)$ and minimized at $t = \lambda$. Also recall that $0 < y_0 < u_0 < y_1 < u_1 < \cdots < y_{n-1} < u_{n-1} < y_n < 1$. We will prove \eqref{eqn:altprod} using only these facts. We consider four cases, depending on where $\lambda$ falls between the elements of the sequence $0, y_0, u_0, \ldots, u_{n-1}, y_n, 1$.

\begin{case}[$\lambda \leq y_0$] \label{case:case1}
Then for $0 \leq i \leq n-1$ we have $\lambda \leq y_i < u_i < y_{i+1}$, so $f'(y_i) \leq f'(u_i) \leq f'(y_{i+1})$. Hence \[\frac{\prod_{i=0}^{n-1} f'(u_i)}{\prod_{i=0}^{n} f'(y_i)} = \frac{1}{f'(y_n)} \prod_{i=0}^{n-1} \frac{f'(u_i)}{f'(y_i)} \geq \frac{1}{f'(y_n)} \geq \frac{f'(\lambda)}{f'(y_1) f'(y_n)}\] and \[\frac{\prod_{i=0}^{n-1} f'(u_i)}{\prod_{i=0}^{n} f'(y_i)} = \frac{1}{f'(y_1)} \prod_{i=0}^{n-1} \frac{f'(u_i)}{f'(y_{i+1})} \leq \frac{1}{f'(y_1)} \leq \frac{1}{f'(\lambda)}.\] The inequality \eqref{eqn:altprod} is proved.
\end{case}
\begin{case}[$y_\ell \leq \lambda \leq u_\ell$ for some $\ell$ with $0 \leq \ell \leq n-1$] \label{case:case2}
Then for $0 \leq i \leq \ell - 1$, we have $u_i < y_{i+1} \leq \lambda$, so $f'(u_i) \geq f'(y_{i+1})$. Similarly, for $\ell + 1 \leq i \leq n-1$, we have $\lambda \leq y_i < u_i$, so $f'(u_i) \geq f'(y_i)$. Hence \[\frac{\prod_{i=0}^{n-1} f'(u_i)}{\prod_{i=0}^{n} f'(y_i)} = \frac{f'(u_\ell)}{f'(y_0) f'(y_n)} \prod_{i=0}^{\ell - 1} \frac{f'(u_i)}{f'(y_{i+1})} \prod_{i=\ell + 1}^{n-1} \frac{f'(u_i)}{f'(y_i)} \geq \frac{f'(u_\ell)}{f'(y_0) f'(y_n)} \geq \frac{f'(\lambda)}{f'(y_0) f'(y_n)}.\] 

Additionally for $0 \leq i \leq \ell-1$ we have $y_i < u_i \leq \lambda$, so $f'(u_i) \leq f'(y_i)$. Similarly, for $\ell \leq i \leq n-1$ we have $\lambda \leq u_i < y_{i+1}$, so $f'(u_i) \leq f'(y_{i + 1})$. Hence \[\frac{\prod_{i=0}^{n-1} f'(u_i)}{\prod_{i=0}^{n} f'(y_i)} = \frac{1}{f'(y_\ell)}\prod_{i=0}^{\ell-1} \frac{f'(u_i)}{f'(y_i)} \prod_{i=\ell}^{n-1} \frac{f'(u_i)}{f'(y_{i+1})} \leq \frac{1}{f'(y_\ell)} \leq \frac{1}{f'(\lambda)}.\] The inequality \eqref{eqn:altprod} is proved.
\end{case}
\begin{case}[$u_\ell \leq \lambda \leq y_{\ell+1}$ for some $\ell$ with $0 \leq \ell \leq n-1$]
Then \eqref{eqn:altprod} follows from an argument similar to the one used in Case~\ref{case:case2}.
\end{case}
\begin{case}[$\lambda \geq y_n$]
Then \eqref{eqn:altprod} follows from an argument similar to the one used in Case~\ref{case:case1}. \qedhere
\end{case}
\end{proof}
From Lemma~\ref{lem:integrand} and \eqref{eqn:polybound} we obtain 
\begin{align}
& (f'(\lambda))^{b - a - 1}\int\displaylimits_{0 < y_0 < \cdots < y_n< 1} y_0^N (1-y_0)^N y_n^N (1 - y_n)^N\prod_{i=1}^{n-1} (y_{i+1} - y_i)^{b-a-1}d\mathbf{y} \nonumber \\
& \leq \int\displaylimits_{0 < y_0 < \cdots < y_n< 1} \left(\frac{\prod_{i=0}^{n-1}(f(y_{i+1})-f(y_i))}{\prod_{i=0}^{n}f'(y_i)}\right)^{b-a-1}d\mathbf{y} \nonumber \\
& \leq \frac{1}{(f'(\lambda))^{b-a-1}} \int\displaylimits_{0 < y_0 < \cdots < y_n< 1} \prod_{i=1}^{n-1} (y_{i+1} - y_i)^{b-a-1}d\mathbf{y}. \label{eqn:boundintegrals}
\end{align}
We now evaluate the the left- and right-hand sides of \eqref{eqn:boundintegrals}. The integral on the right-hand side of \eqref{eqn:boundintegrals} is a multivariate beta (or Dirichlet) integral \cite{Lejeune1839} and evaluates to 
\begin{equation}
\frac{1}{(f'(\lambda))^{b-a-1}} \frac{(\Gamma(b-a))^{n-1} (\Gamma(1))^2}{\Gamma((b-a)(n-1) + 2)} = \frac{(\Gamma(b-a))^{n}}{((b-a)n)!} n^{O_{m, a, b}(1)}. \label{eqn:rhs}
\end{equation}
The left-hand side of \eqref{eqn:boundintegrals} can be evaluated by expanding $(1 - y_0)^N$ and $y_n^N = (1 - (1 - y_n))^N$ using the binomial theorem and evaluating the result as a multivariate beta integral. This yields
\begin{align*}
    & (f'(\lambda))^{b - a - 1}\int\displaylimits_{0 < y_0 < \cdots < y_n< 1} y_0^N (1-y_0)^N y_n^N (1 - y_n)^N\prod_{i=1}^{n-1} (y_{i+1} - y_i)^{b-a-1}d\mathbf{y} \\
    &= (f'(\lambda))^{b - a - 1} \sum_{j=0}^N \sum_{k=0}^N (-1)^{j + k} \binom{N}{j} \binom{N}{k} \int\displaylimits_{0 < y_0 < \cdots < y_n< 1} y_0^{N+j} (1 - y_n)^{N+k}\prod_{i=1}^{n-1} (y_{i+1} - y_i)^{b-a-1}d\mathbf{y} \\
    &= (f'(\lambda))^{b - a - 1} \sum_{j=0}^N \sum_{k=0}^N (-1)^{j + k} \binom{N}{j} \binom{N}{k} \frac{\Gamma(N+j+1) \Gamma(N+k+1) (\Gamma(b-a))^{n-1}}{\Gamma((b-a)(n-1) + 2N + j + k + 2)}.
\end{align*}
Consider the expression $\Gamma((b-a)(n-1) + 2N + j + k + 2)$ appearing in this sum. By applying the identity $\Gamma(z) = \frac{1}{z} \Gamma(z+1)$ to this expression $2N - j - k$ times, we may write the sum as 
\begin{equation}
q_{m, a, b}(n) \frac{(\Gamma(b-a))^{n}}{\Gamma((b-a)(n-1) + 4N + 2)} = \frac{(\Gamma(b-a))^{n}}{((b-a)n)!} n^{O_{m, a, b}(1)}\label{eqn:lhs}
\end{equation}
where $q_{m, a, b}$ is a polynomial independent of $n$. By \eqref{eqn:boundintegrals}, \eqref{eqn:rhs}, and \eqref{eqn:lhs}, we have 
\begin{equation}
    \int\displaylimits_{0 < y_0 < \cdots < y_n< 1} \left(\frac{\prod_{i=0}^{n-1}(f(y_{i+1})-f(y_i))}{\prod_{i=0}^{n}f'(y_i)}\right)^{b-a-1}d\mathbf{y} = \frac{(\Gamma(b-a))^{n}}{((b-a)n)!} n^{O_{m, a, b}(1)}. \label{eqn:prodint}
\end{equation}

Combining \eqref{eqn:modify}, \eqref{eqn:linext}, \eqref{eqn:lastintegral}, and \eqref{eqn:prodint} (and taking logarithms) yields 

\begin{align*}
\log e(P_n^{m, a, b}) &= O_{m, a, b}(\log(n)) + \log(((m-1)n + m - b + a)!) \\
&+ \log\left((b-a-1)!\left(\frac{\Beta\left(\frac{a - 1}{b - a}+1, \frac{m - b}{b - a}+1\right)}{(a-1)!(m-b)!(b-a-1)!}\right)^{n + 1}\right) \\
&+ \log\left(\frac{(\Gamma(b-a))^{n}}{((b-a)n)!} n^{O_{m, a, b}(1)}\right).
\end{align*}

Theorem~\ref{thm:main} now follows from applying Stirling's approximation to the expressions $((m-1)n + m - b + a)!$ and $((b-a)n)!$. \qed

\section{Applications to strong c-Wilf equivalence} \label{sec:proof}
We now turn to the proofs of Corollary~\ref{cor:main}, Theorem~\ref{thm:non-s}, and Corollary~\ref{cor:strong}, which we now restate.
\cormain*
\thmnons*
\corstrong*
\begin{proof}[Proof of Corollary~\ref{cor:main}]
By Theorem~\ref{thm:main}, it suffices to show that $c(m, a, b) < c(m, a', b')$. Let $d = b - a = b' - a' > 1$ and define the function $A : \mathbb{R}_{\geq 0} \to \mathbb{R}$ by \[A(t) = d \log \Gamma\left(\frac{t}{d} + 1\right) - \log \Gamma(t + 1).\] We may compute \[A''(t) = \frac{1}{d} \psi^{(1)}\left(\frac{t}{d} + 1 \right) - \psi^{(1)}(t + 1)\] where $\psi^{(1)}$ is the trigamma function \cite[6.4.1]{abramowitz1964handbook}. By the series expansion for the trigamma function \cite[6.4.10]{abramowitz1964handbook}, we may write this as
\begin{align*}
A''(t) &= \frac{1}{d} \sum_{k=1}^\infty \frac{1}{\left(\frac{t}{d} + k\right)^2}  - \sum_{j=1}^\infty \frac{1}{(t + j)^2}  \\
&= \frac{1}{d} \sum_{k=1}^\infty \frac{1}{\left(\frac{t}{d} + k\right)^2}  - \sum_{k=1}^\infty \sum_{j= d (k - 1) + 1}^{d k} \frac{1}{(t + j)^2} \\
&< \frac{1}{d} \sum_{k=1}^\infty \frac{1}{\left(\frac{t}{d} + k\right)^2}  - d \sum_{k=1}^\infty \frac{1}{(t + d k)^2} \\
&= 0.
\end{align*}
Hence $A$ is strictly concave.

For any $t$, we may write
\begin{align*}
c(m, t, t+d) &= A(t-1) + A(m - t - d) \\
& - d \log \Gamma\left(\frac{m - d - 1}{d} + 2\right) + (m-1) \log(m-1) - d \log d - m + d + 1,
\end{align*}
which is a strictly concave function of $t$ that is symmetric about $t = \frac{m-d+1}{2}$. Hence it is unimodal and maximized at $t = \frac{m-d+1}{2}$. Since $a < a'$ and $a' = \frac{a' + b' - d}{2} \leq \frac{m-d+1}{2}$, we have \[c(m, a, b) = c(m, a, a+d) < c(m, a', a'+d) = c(m, a', b'),\] as desired. 
\end{proof}
\begin{proof}[Proof of Theorem~\ref{thm:non-s}]
Let $(a, b) = (\pi_1, \pi_m)$ and $(a', b') = (\tau_1, \tau_m)$. Since $\pi$ and $\tau$ are standard, we have the inequalities $a < b$ and $a + b \leq m+1$ and $a' < b'$ and $a' + b' \leq m+1$. 

As noted in Section~\ref{sec:introduction}, Dwyer and Elizalde \cite[Theorem~14]{dwyer2018wilf} showed that if $\pi, \tau \in \mathfrak{S}_m$ are non-overlapping and $\pi\sim\tau$, then the posets $P_n^\pi = P_n^{m, a, b}$ and $P_n^\tau = P_n^{m, a', b'}$ have the same number of linear extensions for all $n$. 

By Theorem~\ref{thm:main}, we have $\log e(P_n^\pi) = (m-b+a-1)n\log n + O_{m,a,b}(n)$ and $\log e(P_n^\tau) = (m-b'+a'-1)n\log n + O_{m,a',b'}(n)$, so $b-a = b'-a'$.

Assume for the sake of contradiction that $(a,b)\not= (a',b')$. Without loss of generality $a < a'$. If $b-a=b'-a' > 1$ then by Corollary~\ref{cor:main} we have that $e(P_n^\pi) < e(P_n^\tau)$ for sufficiently large $n$, which is a contradiction.

Finally, if $b-a=b'-a'=1$ we have $e(P_2^\pi) > e(P_2^\tau)$, as noted by Dwyer and Elizalde \cite[Proposition~20]{dwyer2018wilf}, which is again a contradiction. The theorem is proved.
\end{proof}
\begin{proof}[Proof of Corollary~\ref{cor:strong}]
By \cite[Theorem~8]{dwyer2018wilf}, Theorem~\ref{thm:non-s} implies the result.
\end{proof}

\section{Further remarks} \label{sec:concluding}

\subsection{Other Variational Problems}
It is clear that our method of approximating the $(n+1)$-dimensional integral $I_n$ can be extended to approximate integrals of the form
\begin{equation} \label{eqn:generalint}
\int\displaylimits_{0< x_0<\cdots< x_n< 1} \prod_{i=0}^n h(x_i)\prod_{i=0}^{n-1} (x_{i+1} - x_i)^{\beta} d\mathbf{x}
\end{equation}
for any sufficiently well-behaved function $h : [0, 1] \to \mathbb{R}_{\geq 0}$ and any $\beta \geq 0$. The method is to perform a substitution $x_i = j(y_i)$, where $j : [0, 1] \to [0, 1]$ plays the role of $f$ from Section~\ref{sec:bound}. The function $j$ can be found by solving the differential equation that $h(j(t))\cdot j'(t)^{\beta + 1}$ is constant, with the initial conditions $j(0) = 0, j(1) = 1$.

Using the calculus of variations, the function $j$ can also be described as the function that minimizes
\begin{equation} \label{eqn:variational}
\int_0^1 (\log h(j(t)) + (\beta + 1)\log j'(t))dt, 
\end{equation}
subject to $j(0) = 0, j(1) = 1$. Observe that if $x_i = j\left(\frac{i+1}{n+2}\right)$ for $0 \leq i \leq n$ for $n$ large, we have \[\log\left(\prod_{i=0}^n h(x_i)\prod_{i=0}^{n-1} (x_{i+1} - x_i)^{\beta}\right) \approx n \int_0^1 (\log h(j(t)) + \beta\log j'(t))dt - \beta n \log n,\]
and the integral appearing here is formally similar to the one appearing in \eqref{eqn:variational}. It would be interesting to explore why this is the case and provide a satisfying explanation for the extra weight of $\log j'(t)$ in \eqref{eqn:variational}.

\subsection{Conjectures and Further Questions}
Several outstanding questions about random linear extensions of the posets $P_n^{m,a,b}$ and $Q_n^{m, a, b}$ remain. Namely, we wish to demonstrate a concentration result for ``typical'' linear extensions. The following conjecture is motivated by the observation that the volume of the integral $I_n$ from \eqref{eqn:linext} appears to be tightly concentrated near the point \[(x_0, \ldots, x_n) = \left(f\left(\frac{1}{n+2}\right), \ldots, f\left(\frac{n+1}{n+2}\right)\right),\] where $(y_0, \ldots , y_n) = \left(\frac{1}{n+2}, \ldots, \frac{n+1}{n+2}\right)$ are equally spaced.
\begin{conjecture} \label{conj:concentration}
Fix integers $m, a, b$ with $1 \leq a < b \leq m$. Then, with probability tending towards $1$ as $n\rightarrow\infty$, the height of $X_i$ in a uniformly random linear extension of $P_n^{m,a,b}$ is \[|P_n^{m, a, b}| \left(f\left(\frac{i+1}{n+2}\right) + o_{m, a, b}(1)\right)\] for $0 \leq i \leq n$.
\end{conjecture}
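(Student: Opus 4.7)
The plan is to interpret a uniformly random linear extension via the probabilistic model of Section~\ref{sec:prob} and then run a Dirichlet-type concentration argument after the $y$-coordinate change of variables of Section~\ref{sec:bound}. Let $\varphi : P_n^{m,a,b} \to (0,1)$ be a uniformly random function conditioned on being order-preserving, so that the induced linear order $\prec$ is a uniformly random linear extension. Conditional on $\prec$, the multiset $\{\varphi(z) : z \in P_n^{m,a,b}\}$ listed in $\prec$-order is distributed as the order statistics of $|P_n^{m,a,b}|$ i.i.d.\ $\operatorname{Uniform}(0,1)$ random variables; by the Dvoretzky--Kiefer--Wolfowitz inequality applied to their empirical CDF, we obtain $\wt{\operatorname{ht}}_\prec(X_i) = \varphi(X_i) + o_{m,a,b}(1)$ simultaneously in $i$ with probability $1 - o(1)$. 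It therefore suffices to show that $\varphi(X_i) \to f\bigl(\tfrac{i+1}{n+2}\bigr)$ in probability for each $i$.

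Next, I would examine the conditional joint density of $(\varphi(X_0), \ldots, \varphi(X_n))$, which is a mild variant of the integrand on the right-hand side of \eqref{eqn:linext}, differing only in a few boundary exponents since $P_n^{m,a,b}$ lacks the $m-b+a-1$ head/tail elements added to form $Q_n^{m,a,b}$. Performing the change of variables $y_i = g(\varphi(X_i))$ and the mean-value-theorem manipulation used in Lemma~\ref{lem:integrand}, the resulting density in $y$-coordinates is, up to factors depending only on $y_0$ and $y_n$, approximately proportional to $\prod_{i=0}^{n-1}(y_{i+1}-y_i)^{b-a-1}$ on the simplex $0 < y_0 < \cdots < y_n < 1$. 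This is exactly the density of a $\operatorname{Dirichlet}(1, b-a, \ldots, b-a, 1)$ distribution on the $n+2$ gaps $y_0, y_1-y_0, \ldots, y_n-y_{n-1}, 1-y_n$, for which $\EE[y_i] = \tfrac{1+i(b-a)}{n(b-a)+2} = \tfrac{i+1}{n+2} + O_{m,a,b}(1/n)$, and standard second-moment estimates give $|y_i - \EE[y_i]| = O_{m,a,b}(n^{-1/2})$ with high probability. Since $f$ is uniformly continuous on $[0,1]$, a union bound over $i$ then yields the conjecture.

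The main obstacle will be to rigorously justify the Dirichlet approximation in the presence of the multiplicative factors $\prod_{i=0}^n f'(y_i)^{-(b-a-1)}$ that appear in the true density. Lemma~\ref{lem:integrand} controls these only by a boundary-dependent constant $\bigl(f'(\lambda)/(f'(y_0)f'(y_n))\bigr)^{b-a-1}$ that diverges precisely in the Dirichlet regime $y_0, 1-y_n = \Theta(1/n)$. The plan is to use the polynomial bound \eqref{eqn:polybound} to dominate $1/f'(t)^{b-a-1}$ by $t^N(1-t)^N$, absorbing the blow-up into beta-type factors on $y_0$ and $1-y_n$ that can be integrated explicitly and shown to contribute only lower-order corrections; each bulk marginal $y_i$ can then be compared to the corresponding Dirichlet marginal with multiplicative error $1 + o(1)$. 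A cleaner alternative would be to strengthen Lemma~\ref{lem:integrand} to an estimate that is multiplicatively tight on all $n$ gaps for typical configurations (for example, via Taylor's theorem applied to $\log f'$, yielding $\frac{f(y_{i+1})-f(y_i)}{\sqrt{f'(y_i)f'(y_{i+1})}} = (y_{i+1}-y_i)\bigl(1 + O((y_{i+1}-y_i)^2)\bigr)$), which would identify the Dirichlet as the leading-order joint law directly.
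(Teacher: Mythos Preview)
The statement you are attempting to prove is Conjecture~\ref{conj:concentration}, which the paper explicitly leaves open: it appears in Section~\ref{sec:concluding} under ``Conjectures and Further Questions'' and is motivated only by the heuristic that the mass of the integral $I_n$ appears to concentrate near the point $\bigl(f(\tfrac{1}{n+2}),\ldots,f(\tfrac{n+1}{n+2})\bigr)$. There is no proof in the paper to compare your proposal against.

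That said, your plan is a reasonable attack, and your first reduction is sound: conditioning on the induced order $\prec$, the values $\{\varphi(z)\}$ are indeed the order statistics of $|P_n^{m,a,b}|$ i.i.d.\ uniforms, so DKW legitimately reduces the conjecture to showing concentration of $\varphi(X_i)$ near $f\bigl(\tfrac{i+1}{n+2}\bigr)$. You also correctly identify that the joint density of $(\varphi(X_0),\ldots,\varphi(X_n))$ for $P_n^{m,a,b}$ differs from the $Q_n$ integrand only in boundary exponents.

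However, your proposal is not a proof but a strategy, and you yourself flag the real gap: after the change of variables, the factor $\prod_i f'(y_i)^{-(b-a-1)}$ is not controlled by Lemma~\ref{lem:integrand} in a way that survives the typical Dirichlet regime $y_0,\,1-y_n = \Theta(1/n)$. Absorbing this via \eqref{eqn:polybound} gives upper and lower densities that differ by a factor polynomial in $n$, which is fine for the $O(\log n)$ estimate of Theorem~\ref{thm:main} but is far too crude to pin down marginals to $o(1)$ precision. Your suggested refinement via a second-order Taylor expansion of $\log f'$ is the right direction, but it requires uniform control of $f''/f'$ near the endpoints, where $f'$ blows up; carrying this out (or finding a substitute) is exactly the missing work needed to turn the sketch into a proof of the conjecture.
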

The expected height of an element in a uniformly random linear extension of a poset is an object of independent interest \cite{winkler1982average}.

Nakamura also conjectured the following, which appears to defy the techniques we have outlined above. We remark that the natural analogue of this conjecture for ordinary Wilf-equivalence is already false for patterns of length $3$ \cite{dwyer2018wilf}.
\begin{conjecture}[{\cite[Conjecture~6]{nakamura2480computational}}]
Let $\pi, \tau \in \mathfrak{S}_m$. Then $\pi\sim\tau$ if and only if $\pi\overset{s}\sim\tau$.
\end{conjecture}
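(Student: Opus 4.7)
Nakamura's conjecture asserts $\pi \sim \tau$ if and only if $\pi \overset{s}{\sim} \tau$ for arbitrary $\pi, \tau \in \mathfrak{S}_m$. The direction $\pi \overset{s}{\sim} \tau \Rightarrow \pi \sim \tau$ is trivial by summing the equality of occurrence counts over all $k$, so the content is the converse. The plan is to try to recover the full joint distribution of occurrence counts from the avoidance generating function alone, by combining the Goulden-Jackson cluster method with an extended form of the asymptotic machinery of Theorem~\ref{thm:main}.

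First, I would aim to generalize Theorem~\ref{thm:main} from the non-overlapping case to arbitrary $\pi \in \mathfrak{S}_m$, obtaining sharp asymptotics for $\log e(P)$ for every cluster poset $P$ in the Elizalde-Noy family associated to $\pi$ \cite{elizalde2012clusters}. The cluster expansion writes the number of $\pi$-avoiding permutations in $\mathfrak{S}_n$ as an alternating sum of linear extension counts of these posets; a sufficiently refined asymptotic expansion should, in principle, permit the extraction of each individual linear extension count from the avoidance generating function, since different cluster posets grow at different exponential and polynomial rates. Second, having recovered the linear extension counts, one would invoke the Dwyer-Elizalde equivalence \cite[Theorem~14]{dwyer2018wilf} between strong c-Wilf equivalence and equality of cluster-poset linear extension counts to conclude $\pi \overset{s}{\sim} \tau$.

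The main obstacle, as the authors themselves emphasize, is precisely the inclusion-exclusion cancellation inherent in the cluster method. Ordinary c-Wilf equivalence forces equality only of a single alternating combination of cluster poset counts, and there is no a priori reason that two distinct families of counts cannot produce the same signed sum, especially since some cluster posets enter with cancelling signs at similar asymptotic scales. Bypassing this would require a new structural identity — perhaps a positivity, rigidity, or linear-independence result showing that distinct families of linear extension counts yield distinguishable avoidance generating functions — or alternatively a direct bijective argument that sidesteps the cluster machinery. Absent such an ingredient, the conjecture genuinely lies beyond the asymptotic techniques developed in this paper, consistent with the authors' assessment that it ``appears to defy'' their methods.
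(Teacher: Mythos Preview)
The paper does not prove this statement. Nakamura's conjecture is listed in Section~\ref{sec:concluding} explicitly as an \emph{open} conjecture, with the comment that it ``appears to defy the techniques we have outlined above.'' The only partial result the paper cites is that the conjecture is known in the non-overlapping case, via \cite{elizalde2012most, mendes2006permutations}. There is thus no proof in the paper to compare your proposal against.

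Your proposal is not a proof either, and you acknowledge this yourself in the final paragraph. What you have written is a reasonable diagnosis of \emph{why} the conjecture resists the paper's methods: the cluster method expresses the avoidance count as a signed sum over cluster posets, and c-Wilf equivalence only constrains that signed sum, not the individual summands. Your suggestion to seek a linear-independence or rigidity result for the families of linear-extension counts is a sensible direction, but it is a research programme rather than an argument. One further issue with the sketch: even in the non-overlapping case, \cite[Theorem~14]{dwyer2018wilf} derives equality of cluster-poset linear extension counts from c-Wilf equivalence, not from strong c-Wilf equivalence alone, so the logical flow in your second step would need to be rechecked. In any case, the honest conclusion is the one you reach: the conjecture remains open and is not addressed by the paper.
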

This conjecture has already been resolved in the case that $\pi$, $\tau$ are non-overlapping \cite{elizalde2012most, mendes2006permutations}.

\section*{Acknowledgements}
This research was conducted under the supervision of Joseph Gallian at the University of Minnesota Duluth REU, funded by NSF Grant 1650947 and NSA Grant H98230-18-1-0010. The authors thank Joseph Gallian, who ran the REU, brought this question to their attention, and provided helpful comments on the manuscript.
\bibliographystyle{plain}
\bibliography{main}

\end{document}